\documentclass[11pt]{article}
\usepackage{amsthm, amsmath, amssymb, amsfonts, url, booktabs, tikz, setspace, fancyhdr, bm, mathrsfs}
\usepackage{hyperref}
\usepackage{geometry}
\usepackage{hyperref, enumerate}
\usepackage[shortlabels]{enumitem}
\usepackage[babel]{microtype}
\usepackage[english]{babel}
\usepackage[capitalise]{cleveref}
\usepackage{comment}
\usepackage{bbm}
\usepackage{csquotes}
\usepackage{mathabx}
\usepackage{tikz}
\usepackage{graphicx}
\usepackage{float}
\usepackage{amsmath}

\counterwithin{figure}{section}

\newtheorem{theorem}{Theorem}[section]

\newtheorem{lemma}[theorem]{Lemma}

\newtheorem{conj}[theorem]{Conjecture}
\newtheorem{claim}[theorem]{Claim}

\theoremstyle{definition}

\newtheorem*{defn-non}{Definition}

\usepackage[linesnumbered, ruled]{algorithm2e}
\SetKwRepeat{Do}{do}{while}%

\newenvironment{poc}{\begin{proof}[Proof of the claim]}{\end{proof}}

\usepackage{todonotes}

\newcommand{\cP}{\mathcal{P}}

\newcommand{\cF}{\mathcal{F}}

\newcommand{\VC}{\operatorname{VC}}
\newcommand{\Tr}{\operatorname{Tr}}

\newcommand{\W}{\operatorname{W}}

\title{A disproof of the uniform witness conjecture}
\author{
Zixiang Xu\thanks{School of Mathematical Sciences, Zhejiang University, Hangzhou, China. Email: zixiangxu@zju.edu.cn.}
}
\date{}

\begin{document}
\maketitle

\begin{abstract}
The study of \((d+1)\)-uniform set systems with VC-dimension at most \(d\) links the Erd\H{o}s--Ko--Rado theorem with VC-dimension. But already in 1997, Ahlswede and Khachatrian showed that this is not the right extension of the Erd\H{o}s--Ko--Rado theorem. In 2025, Chao, Xu, Yip and Zhang proposed the uniform witness conjecture as a possible right extension: for \(0\le s\le d\), if every set of a \((d+1)\)-uniform family has a missing trace of the same fixed size \(s\), then the family should have size at most \(\binom{n-1}{d}\). They proved the conjecture when \(s=d\), and when \(s=1\) and \(n\) is large. Very recently, Chao, Xu and Zakharov proved the conjecture when \(s\le \frac{d}{2}\) and \(n\) is large.

We fill in the missing half of the picture, although the picture is not the one suggested by the conjecture. More precisely, for \(d\ge4\) and \(\left\lceil \frac{d+2}{2}\right\rceil\le s\le d-1\), we construct such a family \(\mathcal{F}\subseteq\binom{[n]}{d+1}\) with
\(
|\mathcal{F}|=\binom{n-1}{d}+\binom{n-2(d+1-s)-2}{2s-d-2}
\)
for every \(n\ge2(d+1)\), thereby disproving the uniform witness conjecture.

\end{abstract}

\section{Introduction}

A basic goal in extremal set theory is to find how large or how small a set system can be under a given combinatorial condition. One starting point is the theorem of Erd\H{o}s, Ko and Rado~\cite{1961EKR}, which says that for \(n\ge2k\), every intersecting \(k\)-uniform family on \([n]\) has size at most \(\binom{n-1}{k-1}\). Moreover, when \(n>2k\), equality holds only for a star. This theorem led to many later results on intersection conditions and their variants~\cite{1989Chuan,2017PAMSHan,1967HM,2017HuangZhao,2024PengHuang,1986Pyber}.

Here we focus on a related direction, VC-dimension. Let \(\cF\subseteq 2^X\) be a set system. A set \(S\subseteq X\) is shattered by \(\cF\) if \(\Tr_{\cF}(S)=2^S\), where \(\Tr_{\cF}(S)=\{F\cap S:F\in\cF\}\). The VC-dimension of \(\cF\), denoted by \(\VC(\cF)\), is the largest size of a shattered set. The Sauer--Shelah theorem~\cite{1972JCTASauer,1972PACJMShelah,1971TPAVCVC} gives exactly the maximum size of a not necessarily uniform set system with VC-dimension at most \(d\), namely \(\sum_{i=0}^d\binom{n}{i}\). The Hamming ball of radius \(d\) shows that this bound is sharp.

The uniform version is much less understood. For a \((d+1)\)-uniform family \(\cF\subseteq\binom{[n]}{d+1}\), the condition \(\VC(\cF)\le d\) has a simple form: for every \(F\in\cF\), there is a proper subset \(B_F\subsetneq F\) such that \(F\cap F'\ne B_F\) for every \(F'\in\cF\). Thus every edge misses at least one trace on itself. If \(\cF\) is intersecting, then one may take \(B_F=\emptyset\) for every \(F\), so the Erd\H{o}s--Ko--Rado theorem gives examples of size \(\binom{n-1}{d}\).

Because of this link, Erd\H{o}s~\cite{1984Erdos} and Frankl and Pach~\cite{1984Franklpach} conjectured in the 1980s that \(\binom{n-1}{d}\) should be the maximum size of a \((d+1)\)-uniform family with VC-dimension at most \(d\), when \(n\) is sufficiently large compared to \(d\). Frankl and Pach~\cite{1984Franklpach} proved the general upper bound \(\binom{n}{d}\). However, Ahlswede and Khachatrian~\cite{1997CombFan} disproved the conjecture by constructing a family of size \(\binom{n-1}{d}+\binom{n-4}{d-2}\). Mubayi and Zhao~\cite{2007JAC} later gave infinitely many non-isomorphic constructions with the same size, and conjectured that this Ahlswede--Khachatrian value is the correct answer for large \(n\). More recently, Tran and Xu~\cite{2026TranXu}, and Zhao and Ge~\cite{2026GeZhao} independently further improved the lower bound for \(d\ge 3\), thereby disproving the above conjecture of Mubayi and Zhao.

The upper bound side has also seen several developments. Mubayi and Zhao~\cite{2007JAC} improved the Frankl--Pach bound by an \(\Omega_d(\log n)\) term when \(d\) is a prime power and \(n\) is large. Ge, Xu, Yip, Zhang and Zhao~\cite{2024FranklPach} proved that the Frankl--Pach bound is not tight for any \(d\ge2\) and \(n\ge 2d+2\), giving an unconditional improvement. Chao, Xu, Yip and Zhang~\cite{2025CombProof} later gave a purely combinatorial proof of the stronger estimate \(\binom{n-1}{d}+O_d(n^{d-1-\frac{1}{4d-2}})\). Later, Yang and Yu~\cite{2025YangYu} improved this further to \(\binom{n-1}{d}+O_d(n^{d-2})\) for fixed \(d\) and large \(n\). For \(d=2\), Wang, Xu and Zhang~\cite{2025ThreeUniform} determined the exact value for \(n\ge 7\), confirming the Ahlswede--Khachatrian value in the 3-uniform case. These results show both the strength and the difficulty of the original Erd\H{o}s--Frankl--Pach problem~\cite{1984Erdos,1984Franklpach}. They also show that bounded VC-dimension alone is not the right way to recover the Erd\H{o}s--Ko--Rado theorem.

Chao, Xu, Yip and Zhang~\cite{2025CombProof} proposed a more rigid way to recover the Erd\H{o}s--Ko--Rado theorem. Instead of only asking that every edge has some missing trace, they require all missing traces to have the same fixed size. Let \(0\le s\le d\). A family \(\cF\subseteq\binom{[n]}{d+1}\) is called an \(s\)-witness family if for every \(F\in\cF\), there exists \(B_F\in\binom{F}{s}\) such that \(F\cap F'\ne B_F\) for every \(F'\in\cF\). We write \(\W_{d,s}(n)\) for the maximum size of an \(s\)-witness family in \(\binom{[n]}{d+1}\).

The case \(s=0\) is exactly the Erd\H{o}s-Ko-Rado Theorem. The star is an \(s\)-witness family for every \(s\le d\), since for an edge \(F\) containing the center \(1\), any \(s\)-subset of \(F\setminus\{1\}\) is missing from the trace on \(F\). This led to the following conjecture.

\begin{conj}[Uniform witness conjecture~\cite{2025CombProof}]\label{conj:witness}
Let \(n\ge2(d+1)\) and \(0\le s\le d\). If \(\cF\subseteq\binom{[n]}{d+1}\) is an \(s\)-witness family, then \(|\cF|\le\binom{n-1}{d}\).
\end{conj}

Chao, Xu, Yip and Zhang~\cite{2025CombProof} verified Conjecture~\ref{conj:witness} when \(s=d\), and also when \(s=1\) and \(n\) is sufficiently large. In particular, their result implies the case \(d=2\) for large \(n\). Very recently, Chao, Xu and Zakharov~\cite{2026Chao} proved the conjecture when \(s\le \frac{d}{2}\) and \(n\) is sufficiently large. They also constructed various non-star \(s\)-witness families of size exactly \(\binom{n-1}{d}\) for \(\frac{d}{2}<s\le d-1\), showing that equality already looks quite different beyond the threshold \(\frac{d}{2}\).

Our main result shows that Conjecture~\ref{conj:witness} is false in general.

\begin{theorem}\label{thm:main}
Let \(d\ge4\) and let \(s\) be an integer with
\(\left\lceil \frac{d+2}{2}\right\rceil\le s\le d-1.\) If \(n\ge2(d+1)\), then
\[
\W_{d,s}(n)\ge\binom{n-1}{d}+\binom{n-2(d+1-s)-2}{2s-d-2}.
\]
\end{theorem}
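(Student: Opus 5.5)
The plan is to give an explicit construction: start from the star at the element \(1\), perturb it near a small special set, and check the witness condition by cases. Put \(t=d+1-s\). The hypothesis \(\lceil\frac{d+2}{2}\rceil\le s\le d-1\) says exactly that \(2\le t\le d/2\). In this notation the extra term is \(\binom{n-2t-2}{2s-d-2}\), and \(2s-d-2=(d+1)-(2t+1)\). This suggests fixing a special set \(S=\{1\}\cup A\) with \(|A|=2t+1\) and \(1\notin A\), so \(|S|=2t+2\) and \(|[n]\setminus S|=n-2t-2\). The new sets will then be \(A\cup C\) with \(C\in\binom{[n]\setminus S}{2s-d-2}\); there are exactly \(\binom{n-2t-2}{2s-d-2}\) of them, and none contains \(1\). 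I would split \(A=A_1\cup A_2\) with \(|A_1|=t+1\) and \(|A_2|=t\). The intended witness of \(A\cup C\) is \(B_{A\cup C}=A_1\cup C\), which has size \((t+1)+(2s-d-2)=s\).

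\textbf{Step 1 (modifying the star).} An edge \(F\) with \(F\cap(A\cup C)=A_1\cup C\) must contain \(A_1\cup C\) and avoid \(A_2\). In the star these are the sets \(\{1\}\cup A_1\cup D\) with \(D\cap A_2=\emptyset\) and \(D\supseteq C\). So I would define \(\mathcal{R}\) as the family of star sets \(F\ni 1\) with \(A_1\subseteq F\) and \(F\cap A_2=\emptyset\). I would remove \(\mathcal{R}\) and replace it by its image under a shift: \(1\) is replaced by a fixed element \(a\in A_2\), giving sets \(\{a\}\cup A_1\cup D\). This operation is injective, so the total size becomes \(\binom{n-1}{d}+\binom{n-2t-2}{2s-d-2}\), provided the shifted sets do not coincide with the new sets \(A\cup C\) and stay distinct from the surviving star sets. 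Shifted sets contain \(A_1\cup\{a\}\) but not \(1\), and since \(t\ge2\) they miss \(A_2\setminus\{a\}\neq\emptyset\). Hence they differ from the sets \(A\cup C\), which contain all of \(A_2\). If the shift turns out to create bad traces, the fallback is to choose the replacement sets for \(\mathcal{R}\) more carefully, for instance using several elements of \(A_2\) or restricting \(D\) further. The real content of the proof is tuning this so that the final count is exact.

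\textbf{Step 2 (assigning witnesses and verifying).} There are three types of edges: surviving star sets, shifted sets, and new sets \(A\cup C\). For a surviving star set \(F\), I would choose \(B_F\subseteq F\setminus\{1\}\) of size \(s\) as follows. Every other surviving star set contains \(1\), so its trace on \(F\) contains \(1\) and cannot equal \(B_F\). It then suffices to make \(B_F\) differ from every trace \(F\cap(A\cup C)\) and \(F\cap(\{a\}\cup A_1\cup D)\). For example, take \(B_F\) to meet \(F\cap A\) in a pattern that no new or shifted set realizes, such as containing an element of \(A_2\setminus\{a\}\) while missing some element of \(A_1\cap F\). The constraints \(|A_2|=t\ge 2\) and \(s\ge t+1\) should give enough room for this, with a subcase analysis according to \(|F\cap A_1|\) and \(|F\cap A_2|\). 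For the shifted sets I would take witnesses of the form \((A_1\cup\{a\}\cup D')\) minus a suitable element, checked against all three types. For \(A\cup C\), the witness \(A_1\cup C\) is safe: surviving star sets containing \(A_1\) meet \(A_2\); shifted sets contain \(a\in A_2\); and two new sets meet in all of \(A\).

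\textbf{Main obstacle.} I expect the hard step to be the case analysis for surviving star sets \(F\) that meet \(A\) in few elements, for example \(F\cap A\subseteq A_1\). For such \(F\) the new sets \(A\cup C\) realize many traces \(F\cap(A\cup C)\) inside \(F\setminus\{1\}\), and the shifted sets realize still more. So one has to show that some \(s\)-subset of \(F\setminus\{1\}\) avoids all of them. I would first try a counting or degree argument: the traces coming from new and shifted sets are all of the special form \((F\cap A_1\text{ or }F\cap A)\cup(\text{part of }F\setminus S)\). Because \(s\ge t+1\) forces \(B_F\) to use at least one element outside \(A_1\), one can aim to choose \(B_F\) with \(|B_F\cap A|\) of the wrong parity or size relative to these forms. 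If this fails, the fallback is to shrink the shifted family further, for example by requiring \(D\) to meet a fixed element, at the cost of adjusting the count.
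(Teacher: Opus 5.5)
\textbf{Verdict: genuine gap.} The construction in Step~1 does not work, and the failure is in the shifted sets, not in the surviving star sets where you placed the main obstacle. With only $\mathcal{R}$ deleted from the star, the shifted sets $G=\{a\}\cup A_1\cup D$ have no missing $s$-trace at all. To see this, take any $W\in\binom{G}{s}$ and pick $b\in A_2\setminus\{a\}$, which exists since $t\ge 2$. Choose a $(d-s)$-set $E\ni b$ inside $[n]\setminus(G\cup\{1\})$; this is possible since $d-s\ge 1$ and $n\ge 2d+2$. Then $F'=\{1\}\cup W\cup E$ is a star set meeting $A_2$, so $F'\notin\mathcal{R}$ and $F'$ survives. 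Since $F'\cap G=W$, every candidate witness is realized, including your proposed $(A_1\cup\{a\}\cup D')$ minus an element.

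Your fallbacks cannot repair this. Suppose the deleted part of the star is exactly $\mathcal{R}$, and a set $G$ with $1\notin G$ has a missing $s$-trace $W$. Then every set $\{1\}\cup W\cup E$ with $E\in\binom{[n]\setminus(G\cup\{1\})}{d-s}$ must lie in $\mathcal{R}$. This forces $A_2\subseteq G$ and $A_1\subseteq W$, i.e.\ $G=A\cup C$. So the only sets you can add at all are the new ones. Since $|\mathcal{R}|=\binom{n-2t-2}{s-2}$, you end up with $\binom{n-1}{d}-\binom{n-2t-2}{s-2}+\binom{n-2t-2}{2s-d-2}<\binom{n-1}{d}$. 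Re-choosing the replacement sets or restricting $D$ goes in the wrong direction: you have to delete more of the star.

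\textbf{The missing idea} is to delete a whole block of patterns and shift all of them. Write $A_1=T\cup\{\ell_0\}$ and $A_2=\{a\}\cup(L\setminus\{\ell_0\})$ with $|T|=|L|=t$; this is the paper's choice, with $a=2$. Delete every star set $F$ with $F\cap(\{a\}\cup T\cup L)=T\cup Y$ for some $Y\subsetneq L$, and replace it by $(F\setminus\{1\})\cup\{a\}$. Your $\mathcal{R}$ is only the slice $Y=\{\ell_0\}$ of this block.

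A replacement set $\{a\}\cup T\cup Y\cup D$ then has witness $T\cup X$ with $X\subseteq D$ and $|X|=s-t$, which fits since $|Y|\le t-1$. Every non-star set contains $a$, so none realizes this trace. A star set realizing it would have pattern $\{1\}\cup T\cup Z$ with $Z\cap Y=\emptyset$. Such a set is deleted unless $Z=L$, and in that case it has only $d-2t=s-t-1<|X|$ points outside $\{1,a\}\cup T\cup L$.

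The shift is a size-preserving bijection between deleted and added patterns. The single unmatched pattern $\{a\}\cup T\cup L$ contributes exactly $\binom{n-2t-2}{2s-d-2}$. With this larger deletion the surviving star sets become the easy part. A witness avoiding $1$ works after a short size check, provided it also avoids one point of $F\cap T$ when $F$ meets $T$ (all added sets contain $T$).
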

This covers every possible \(s\) when \(d\) is even, and every possible \(s\) except the lower boundary \(s=\frac{d+1}{2}\) when \(d\) is odd. Moreover, combining Theorem~\ref{thm:main} and the results in~\cite{2025CombProof,2026Chao,1961EKR} gives a rather strange picture: the conjecture is true for \(0\le s\le \frac{d}{2}\) and large \(n\), and it is also true for \(s=d\), but the middle parameters violate the conjectured bound.

\section{Proof of Theorem~\ref{thm:main}}

Let \(C\) and \(U\) be disjoint subsets of \([n]\), let \(\cP\subseteq 2^C\), and define
\[
\cF(\cP,U)=\bigg\{P\cup A:P\in\cP,\ A\in\binom{U}{d+1-|P|}\bigg\}.
\]
The set \(C\) is fixed and small. A set \(P\in\cP\) records the part of an edge inside \(C\), while \(A\subseteq U\) fills the remaining vertices.

\begin{lemma}\label{lem:criterion}
Let \(r=d+1-s\). Suppose that for every \(P\in\cP\), there is a set \(\tau(P)\subsetneq P\) such that \(|P|-|\tau(P)|\le r\), \(|\tau(P)|\le s\), and there is no \(Q\in\cP\) satisfying
\(
Q\cap P=\tau(P)\) and \(|Q|\le |\tau(P)|+r.\)
Then \(\cF(\cP,U)\) is an \(s\)-witness family.
\end{lemma}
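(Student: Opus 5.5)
For each edge \(F=P\cup A\in\cF(\cP,U)\), with \(P\in\cP\) and \(A\in\binom{U}{d+1-|P|}\), I will choose the witness
\[
B_F=\tau(P)\cup A',
\]
where \(A'\subseteq A\) has \(|A'|=s-|\tau(P)|\). I then show that no edge \(F'=Q\cup A''\) of \(\cF(\cP,U)\) satisfies \(F\cap F'=B_F\).

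\textbf{Step 1: \(B_F\) is a well-defined \(s\)-subset of \(F\).} We need \(0\le s-|\tau(P)|\le |A|=d+1-|P|\). The lower bound is the hypothesis \(|\tau(P)|\le s\). The upper bound is equivalent to \(|P|-|\tau(P)|\le d+1-s=r\), which is also a hypothesis. So \(A'\) exists, and \(|B_F|=s\) because \(C\) and \(U\) are disjoint.

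\textbf{Step 2: a bad edge forces the forbidden \(Q\).} Suppose \(F'=Q\cup A''\in\cF(\cP,U)\) satisfies \(F\cap F'=B_F\). Since \(C\cap U=\emptyset\), we may intersect separately inside \(C\) and inside \(U\). Inside \(C\) this gives \(Q\cap P=\tau(P)\). Inside \(U\) it gives \(A''\cap A=A'\), so in particular \(A'\subseteq A''\). Hence
\[
d+1-|Q|=|A''|\ge |A'|=s-|\tau(P)|,
\]
that is, \(|Q|\le |\tau(P)|+d+1-s=|\tau(P)|+r\). Thus \(Q\in\cP\) satisfies both \(Q\cap P=\tau(P)\) and \(|Q|\le|\tau(P)|+r\), which the hypothesis forbids. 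This contradiction proves that \(F\cap F'\ne B_F\) for every \(F'\in\cF(\cP,U)\).

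\textbf{Step 3: the case \(F'=F\).} Here \(Q=P\), and \(F\cap F=F\ne B_F\) because \(|B_F|=s<d+1\). This case is also already covered by Step 2: it would need \(P\cap P=P=\tau(P)\), which is impossible since \(\tau(P)\subsetneq P\).

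The main point to get right is the size bookkeeping in Step 2. The bound \(|Q|\le|\tau(P)|+r\) comes exactly from the requirement \(A'\subseteq A''\), so \(A'\) must be taken of size exactly \(s-|\tau(P)|\). There is no real obstacle beyond this; the argument is a direct verification.
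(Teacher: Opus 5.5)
Your proof is correct and matches the paper's argument: the witness is \(B_F=\tau(P)\cup X\) with \(X\subseteq A\) of size \(s-|\tau(P)|\), and comparing intersections separately inside \(C\) and \(U\) forces \(Q\cap P=\tau(P)\) and \(|Q|\le|\tau(P)|+r\). The paper additionally notes that \(|P|\le|\tau(P)|+r\le d+1\), so \(\cF(\cP,U)\) is well defined; your Step 3 is redundant but harmless.
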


\begin{proof}[Proof of Lemma~\ref{lem:criterion}]
For every \(P\in\cP\), we have \(|P|\le |\tau(P)|+r\le s+r=d+1\), so the family above is well-defined. Fix \(F=P\cup A\in\cF(\cP,U)\). Since \(|P|-|\tau(P)|\le r=d+1-s\), we get
\[
s-|\tau(P)|\le d+1-|P|=|A|.
\]
Choose \(X\subseteq A\) with \(|X|=s-|\tau(P)|\), and put \(B_F=\tau(P)\cup X\). Then \(B_F\subseteq F\) and \(|B_F|=s\). We claim that \(B_F\) is missing from the trace on \(F\). Indeed, suppose that \(F'=Q\cup A'\in\cF(\cP,U)\) satisfies \(F\cap F'=B_F\). Comparing the vertices in \(C\) and in \(U\) separately gives
\(Q\cap P=\tau(P)\) and \(A'\cap A=X.\)
Hence
\[
d+1-|Q|=|A'|\ge |X|=s-|\tau(P)|,
\]
which further implies that \(|Q|\le |\tau(P)|+r\), contradicting the assumption on \(\tau(P)\). Thus \(B_F\notin\Tr_{\cF}(F)\), and \(\cF(\cP,U)\) is an \(s\)-witness family.
\end{proof}

Put \(r=d+1-s\). From the assumptions we have
\(r\ge2,\) \(s\ge r+1\) and \(d-2r=2s-d-2\ge 0.\)
In particular \(2r\le d\), so there is enough room to choose two disjoint \(r\)-sets \(T,L\subseteq[n]\setminus\{1,2\}\). Now fix
\(
C:=\{1,2\}\cup T\cup L\) and \(U=[n]\setminus C.\)
Define
\[
\cP=\left(\{P\subseteq C:1\in P,\ |P|\le d+1\}\setminus\{\{1\}\cup T\cup Y:Y\subsetneq L\}\right)\cup\{\{2\}\cup T\cup Y:Y\subseteq L\}.
\]
So we start from the \(1\)-star inside \(C\), remove all sets \(\{1\}\cup T\cup Y\) with \(Y\subsetneq L\), put back the corresponding sets \(\{2\}\cup T\cup Y\), and also add the last set \(\{2\}\cup T\cup L\). This last set has size \(1+2r\le d+1\), so it is included in the construction.

\begin{claim}\label{claim:main-count}
For \(\cF=\cF(\cP,U)\),
\(
|\cF|=\binom{n-1}{d}+\binom{n-2r-2}{d-2r}.
\)
\end{claim}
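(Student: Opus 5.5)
Since \(C\) and \(U\) are disjoint, the map \((P,A)\mapsto P\cup A\) is injective: \(P=(P\cup A)\cap C\) and \(A=(P\cup A)\cap U\). Hence
\[
|\cF(\cP,U)|=\sum_{P\in\cP}\binom{|U|}{d+1-|P|},\qquad |U|=n-2r-2 .
\]
The plan is to compare this sum with the \(1\)-star in \(\binom{[n]}{d+1}\). Split that star according to the trace on \(C\). The star consists of all sets \(P\cup A\) with \(1\in P\subseteq C\), \(|P|\le d+1\), and \(A\in\binom{U}{d+1-|P|}\). So it equals \(\cF(\cP_0,U)\), where \(\cP_0=\{P\subseteq C:1\in P,\ |P|\le d+1\}\), and therefore
\[
\binom{n-1}{d}=\sum_{P\in\cP_0}\binom{|U|}{d+1-|P|}.
\]

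Next I will account for how \(\cP\) differs from \(\cP_0\). First, \(\cP\) removes the sets \(\{1\}\cup T\cup Y\) with \(Y\subsetneq L\). Each such set is in \(\cP_0\), since its size \(1+r+|Y|\le 2r\le d+1\). Second, \(\cP\) adds the sets \(\{2\}\cup T\cup Y\) with \(Y\subseteq L\). These contain \(2\) but not \(1\), so they are not in \(\cP_0\) and are distinct from the sets kept. For \(Y\subsetneq L\), the removed set \(\{1\}\cup T\cup Y\) and the added set \(\{2\}\cup T\cup Y\) have the same size, so they contribute the same binomial term and cancel. The only surviving extra term comes from \(Y=L\), namely \(P=\{2\}\cup T\cup L\) with \(|P|=2r+1\le d+1\). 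It contributes
\[
\binom{n-2r-2}{d+1-(2r+1)}=\binom{n-2r-2}{d-2r}.
\]
This gives the claimed count.

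The main point to verify carefully is the bookkeeping that \(\cP\) is a genuine disjoint union. The removed sets all lie in \(\cP_0\). The added sets are new, and no two of them coincide. All sizes are at most \(d+1\), which holds because \(2r\le d\). No step presents a real obstacle; the injectivity of \((P,A)\mapsto P\cup A\) is what justifies summing over \(\cP\).
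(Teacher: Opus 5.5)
Your proposal is correct and follows essentially the same route as the paper: you split the $1$-star by its trace on $C$, cancel each removed $\{1\}\cup T\cup Y$ ($Y\subsetneq L$) against the added $\{2\}\cup T\cup Y$ of equal size, and count the lone unpaired set $\{2\}\cup T\cup L$ to get $\binom{n-2r-2}{d-2r}$. The only difference is that you spell out the injectivity of $(P,A)\mapsto P\cup A$ and the disjointness bookkeeping, which the paper uses implicitly.
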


\begin{poc}
The full \(1\)-star inside \(C\), namely \(\{P\subseteq C:1\in P,\ |P|\le d+1\}\), gives exactly all \((d+1)\)-sets containing \(1\). Hence it contributes \(\binom{n-1}{d}\). For every \(Y\subsetneq L\), the removed set \(\{1\}\cup T\cup Y\) and the added set \(\{2\}\cup T\cup Y\) have the same size, so they contribute the same number of edges. The only added set without a removed partner is \(\{2\}\cup T\cup L\). Its size is \(2r+1\), and its contribution is \(\binom{|U|}{d+1-(2r+1)}=\binom{n-2r-2}{d-2r}.\)
This proves the claim.
\end{poc}

\begin{claim}\label{claim:main-certificate}
The family \(\cP\) satisfies the assumptions of Lemma~\ref{lem:criterion}.
\end{claim}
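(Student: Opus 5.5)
The plan is to write down an explicit certificate \(\tau(P)\) for every \(P\in\cP\) and check the three requirements of Lemma~\ref{lem:criterion}: \(\tau(P)\subsetneq P\), \(|P|-|\tau(P)|\le r\) and \(|\tau(P)|\le s\), together with the absence of any \(Q\in\cP\) with \(Q\cap P=\tau(P)\) and \(|Q|\le|\tau(P)|+r\). I will use \(r\ge2\) and \(s\ge r+1\) throughout. It is convenient to split \(\cP\) into the \emph{type (a)} sets, which contain \(1\), and the \emph{type (b)} sets \(\{2\}\cup T\cup Y\) with \(Y\subseteq L\), which do not contain \(1\). Two facts drive everything. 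First, if \(1\notin\tau(P)\) and \(1\notin Q\), then \(Q\) is of type (b) and so contains \(2\) and \(T\). Second, if \(2\in P\) but \(2\notin\tau(P)\), then no type (b) set \(Q\) can satisfy \(Q\cap P=\tau(P)\).

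\emph{Type (a), \(P\ni1\).} Put \(P_0=P\setminus\{1\}\), and if \(2\in P\) or \(P\cap T\ne\emptyset\) also delete one such element (\(2\), resp.\ some \(t\in P\cap T\)) from \(P_0\). Let \(\tau(P)\) be \(P_0\) if \(|P_0|\le s\), and otherwise any \(s\)-subset of \(P_0\). The size conditions are routine: if \(\tau(P)=P_0\), then \(|P|-|\tau(P)|\le2\le r\); otherwise \(|P|-|\tau(P)|=|P|-s\le d+1-s=r\). Also \(1\notin\tau(P)\), so \(\tau(P)\subsetneq P\), and any bad \(Q\) must be of type (b).

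There are three cases for such a \(Q=\{2\}\cup T\cup Y\).
\begin{enumerate}[(i)]
\item If \(2\in P\), then \(2\in Q\cap P\) but \(2\notin\tau(P)\), so \(Q\) is impossible.
\item If \(P\cap T\ne\emptyset\), then \(Q\cap P\supseteq P\cap T\), which contains the deleted \(t\notin\tau(P)\), so again \(Q\) is impossible.
\item Otherwise \(P\subseteq\{1\}\cup L\), and \(\tau(P)=P\setminus\{1\}\) has size at most \(r\le s\). Then \(Q\cap P=Y\cap P=\tau(P)\) forces \(|Y|\ge|\tau(P)|\), so \(|Q|\ge 1+r+|\tau(P)|>|\tau(P)|+r\).
\end{enumerate}

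\emph{Type (b), \(P=\{2\}\cup T\cup Y\).} Here I take \(\tau(P)=T\cup Y'\), where \(Y'=\emptyset\) if \(Y=\emptyset\), and otherwise \(Y'\subsetneq Y\) is any subset of size \(\max(0,|Y|+1-r)\). This is a proper subset of \(Y\) because \(r\ge2\).

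The size conditions hold:
\[
|P|-|\tau(P)|=1+|Y|-|Y'|\le r,\qquad |\tau(P)|\le r+1\le s .
\]
Since \(2\notin\tau(P)\), type (b) sets are excluded. A type (a) set \(Q\) with \(Q\cap P=\tau(P)\) satisfies \(Q\subseteq C\setminus(P\setminus\tau(P))\), so it has the form \(Q=\{1\}\cup T\cup Y'\cup Z\) with \(Z\subseteq L\setminus Y\). All sets \(\{1\}\cup T\cup W\) with \(W\subsetneq L\) were removed from \(\cP\), so we need \(Y'\cup Z=L\). When \(Y\ne\emptyset\) this is impossible, since \(Y'\subsetneq Y\) misses an element of \(Y\) that \(Z\) cannot supply. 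When \(Y=\emptyset\), the only candidate is \(Q=\{1\}\cup T\cup L\), and \(|Q|=2r+1>r+r=|\tau(P)|+r\).

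The step needing care is type (b). The naive choices \(\tau(P)=\{2\}\cup Y\) or \(\tau(P)\ni2\) fail, because sets such as \(\{1,2\}\cup Y\) or other type (b) sets then become small bad \(Q\)'s. The key idea is to drop \(2\) and exploit the removed family \(\{1\}\cup T\cup Y\), while shrinking \(Y\) just enough to keep \(|\tau(P)|\le s\).
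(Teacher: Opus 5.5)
Your proof is correct and follows essentially the paper's route. Your type (b) certificates are exactly the paper's: \(\tau(P)=T\) for \(Y\subsetneq L\), and \(\tau(P)=T\cup\{\ell_0\}\) for \(Y=L\). For sets containing \(1\) you also keep \(1\) out of \(\tau(P)\), and block each \(\{2\}\cup T\cup Y\) either by omitting an element it would force into the intersection or by the size bound. The differences are cosmetic: you take \(\tau(P)\) as large as allowed and also use \(2\) as a blocking element, whereas the paper takes \(|\tau(P)|=\max(0,|P|-r)\) and blocks only with some \(t_0\in P\cap T\) plus the size count. Case (i) tacitly assumes \(2\) is the deleted element whenever \(2\in P\); if you deleted some \(t\in P\cap T\) instead, case (ii) covers it, so nothing is lost.
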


\begin{poc}
The main task is to choose \(\tau(P)\) for each \(P\in\cP\).

First take \(P=\{2\}\cup T\cup Y\) with \(Y\subsetneq L\). Set \(\tau(P)=T\). Then \(\tau(P)\subsetneq P\), \(|P|-|\tau(P)|=1+|Y|\le r\), and \(|\tau(P)|=r\le s\). Suppose that some \(Q\in\cP\) satisfies \(Q\cap P=T\). If \(Q\) is also of the form \(\{2\}\cup T\cup Z\), then \(2\in Q\cap P\), impossible. Thus \(Q\) must contain \(1\). To have intersection exactly \(T\), it must be of the form \(\{1\}\cup T\cup Z\), where \(Z\subseteq L\) and \(Z\cap Y=\emptyset\). If \(Z\subsetneq L\), then this set was removed from \(\cP\). If \(Z=L\), then \(Y=\emptyset\), and in this case \(|Q|=1+2r>2r=|\tau(P)|+r.\)
So no such \(Q\) satisfies the size condition in Lemma~\ref{lem:criterion}.

Next take the remaining added set \(P=\{2\}\cup T\cup L\). Fix one element \(\ell_0\in L\), and set \(\tau(P)=T\cup\{\ell_0\}\). Then \(\tau(P)\subsetneq P\), \(|P|-|\tau(P)|=r\), and \(|\tau(P)|=r+1\le s\). A set of the form \(\{2\}\cup T\cup Z\) cannot have intersection \(\tau(P)\) with \(P\), because the intersection contains \(2\). A set containing \(1\) would have to contain \(T\cup\{\ell_0\}\), avoid \(2\), and avoid every point of \(L\setminus\{\ell_0\}\). Hence it would be exactly \(\{1\}\cup T\cup\{\ell_0\}\), which was removed. Thus the required exclusion also holds for this \(P\).

It remains to consider \(P\in\cP\) with \(1\in P\). If \(|P|\le r\), set \(\tau(P)=\emptyset\). Then \(\tau(P)\subsetneq P\), \(|P|-|\tau(P)|\le r\), and \(|\tau(P)|\le s\). Notice that no set containing \(1\) can have empty intersection with \(P\), since \(1\in P\). Any set of the form \(\{2\}\cup T\cup Z\) has size at least \(r+1\), so it cannot satisfy \(|Q|\le r=|\tau(P)|+r\).

Now assume that \(1\in P\), \(|P|>r\), and \(P\cap T=\emptyset\). Then all points of \(P\), except possibly \(1\) and \(2\), lie in \(L\). Since \(r\ge2\), there are enough points in \(P\cap L\) to choose
\(\tau(P)\subseteq P\cap L\) with \(|\tau(P)|=|P|-r.\)
Then \(\tau(P)\) does not contain \(1\). Hence no pattern containing \(1\) can have intersection \(\tau(P)\) with \(P\). On the other hand, if \(Q\) is one of the added sets, say \(Q=\{2\}\cup T\cup Z\), and \(Q\cap P=\tau(P)\), then \(Q\) must contain \(T\) and all points of \(\tau(P)\). Thus \(|Q|\ge |\tau(P)|+r+1,\)
which yields that this set \(Q\) is too large to be relevant in Lemma~\ref{lem:criterion}.

Finally assume that \(1\in P\), \(|P|>r\), and \(P\cap T\ne\emptyset\). Since \(r\ge 2,\) we can choose \(t_0\in P\cap T\), and further choose \(\tau(P)\subseteq P\setminus\{1,t_0\}\) with \(|\tau(P)|=|P|-r.\) Clearly, \(\tau(P)\subsetneq P\) and \(|\tau(P)|\le s\). Notice that any set containing \(1\) has intersection with \(P\) containing \(1\), and any set of the form \(\{2\}\cup T\cup Z\) has intersection with \(P\) containing \(t_0\). Since \(\tau(P)\) contains neither \(1\) nor \(t_0\), no such \(Q\) can have \(Q\cap P=\tau(P)\). This completes the verification and proves the claim.
\end{poc}

By Lemma~\ref{lem:criterion} and Claims~\ref{claim:main-count} and~\ref{claim:main-certificate}, the family \(\cF\) is an \(s\)-witness family and
\[
|\cF|=\binom{n-1}{d}+\binom{n-2(d+1-s)-2}{2s-d-2}.
\]
The second term is positive, because \(d-2r\ge0\) and \(n-2r-2\ge d-2r\). This proves Theorem~\ref{thm:main}.
\section{Some remarks}
Together with the known results for \(s\le \frac{d}{2}\)~\cite{2026Chao} and for
\(s=d\)~\cite{2025CombProof}, Theorem~\ref{thm:main} leaves only a very thin
problem whether
\(
\W_{2s-1,s}(n)>\binom{n-1}{2s-1}
\)
can hold for some fixed \(s\ge2\) and infinitely many \(n\).

More generally, although Conjecture~\ref{conj:witness} fails in the range
covered by Theorem~\ref{thm:main}, the star may still give the correct main
term. Our construction has excess
\[
\binom{n-2(d+1-s)-2}{2s-d-2}
=\Theta_{d,s}\bigl(n^{2s-d-2}\bigr).
\]
It would be interesting to determine whether this order of magnitude in error term is
optimal.

\section*{Acknowledgement}
Zixiang Xu would like to thank Ting-Wei Chao for discussions on this problem at IBS during the winter of 2024, Dmitrii Zakharov for kindly sharing and discussing the proof of the case \(s\le \frac{d}{2}\) in~\cite{2026Chao}, and Prof. Jian Wang and Jialuo Wang for several discussions on attempts to prove this conjecture for small parameters.

\bibliographystyle{abbrv}
\bibliography{uniform_witness_disproof}

\end{document}